 \newtheorem{theorem}{Theorem}[section]
\newtheorem{corollary}[theorem]{Corollary}
\newtheorem{remark}[theorem]{Remark}
\newtheorem{lemma}[theorem]{Lemma}
\def\be{\begin{equation}}
\def\ee{\end{equation}}
\def\ben{\begin{displaymath}}
\def\een{\end{displaymath}}
\def\baa{\begin{eqnarray}}
\def\eaa{\end{eqnarray}}
\def\ba{\begin{array}}
\def\ea{\end{array}}
\renewcommand{\leq}{\leqslant}
\renewcommand{\geq}{\geqslant}
\newcommand{\Tr}{\operatorname{Tr}}
\begin{document}
\title{Determinant of Friederichs Dirichlet Laplacians on $2$-dimensional hyperbolic cones}

\author{Victor Kalvin}

\date{}
\maketitle

\begin{abstract} We explicitly express the spectral determinant of Friederichs Dirichlet Laplacians on the $2$-dimensional hyperbolic (Gaussian curvature $-1$) cones in terms of the cone angle and the geodesic radius of the boundary.  

\end{abstract}

\noindent{MSC: 58J52, 11M36}

\section{Introduction and main results}\label{S1}
In this paper we find an explicit formula for the spectral determinant of Friederichs Dirichlet Laplacians on the $2$-dimensional hyperbolic (constant Gaussian curvature $-1$) cones in terms of the cone angle and the geodesic radius of the boundary.  The result is of interest in theoretical physics (in connection with study of quantum Hall effect on singular surfaces,  e.g.~\cite{Klevtsov}), geometric analysis, mathematical physics, number theory, and arithmetic geometry  (see e.g.~\cite{F-P,TZ,Teo}, where hyperbolic cones appear due to elliptic generators of a Fuchsian group).  In particular, in~\cite{F-P} the statement and proof of Riemann-Roch isometry in the non-compact orbifold setting is based, among many other things, on direct calculation of  asymptotics for the determinant on a shrinking model hyperbolic cone (see Theorem~2.5 and its lengthy  and highly technical proof in Section 7 op.cit.). 
The asymptotics~\cite[Theorem 2.5]{F-P} is unfortunately incorrect even in the simplest case with no conical singularity (see Remark~\ref{pol-alv} at the end of this paper). We obtain the correct one as an immediate consequence of our  formula for the determinant.

In this paper we employ an ad-hoc and rather simple method:  we first obtain auxiliary results for spherical  cones and then pass from spherical (positive constant curvature) to  hyperbolic (negative constant curvature) cones by means of analytic continuation with respect to the curvature. In particular, when passing through the curvature zero case we independently obtain the well-known results for the flat cone~\cite{Spreafico}. The  results for spherical cones are retrieved from those  for the spectral determinant on a spindle~\cite{Spreafico3} with the help of integral representation for the Barnes double zeta function~\cite{Spreafico2},  the BFK  decomposition formula~\cite{BFK}, its modification for surfaces with boundary~\cite{LeeBFKboundary}, and the classical Polyakov-Alvarez formula~\cite{Alvarez,OPS}. 

In the geodesic polar coordinates $(r,\theta)$ centered at the vertex,  the metric $m_a$ of a hyperbolic cone of angle $2\pi a$  is given by 
\begin{equation}\label{ma}
m_a=dr^2+a^2\sinh^2 r\,d\theta^2,\quad a>0,
\end{equation}
where  $r\in [0,\eta]$, $\eta$ is the geodesic radius of the boundary, and $\theta\in \Bbb R/2\pi\Bbb Z$. On the hyperbolic cone $\left([0,\eta]_r\times[0,2\pi)_\theta, m_a \right)$ the corresponding Laplace-Beltrami operator $\Delta_a$, initially defined on the smooth functions supported outside of the vertex $r=0$ and satisfying the Dirichlet boundary condition on the circle $r=\eta$,  is not essentially selfadjoint. We pick the Friederichs selfadjoint extension, which we denote by $\Delta^D_{a}\!\!\restriction_{r\leq \eta}$. 
 The spectrum of $\Delta^D_{a}\!\!\restriction_{r\leq \eta}$ consists of positive eigenvalues of finite multiplicity and the spectral $\zeta$-regularized determinant $\det(\Delta^D_{a}\!\!\restriction_{r\leq \eta})$ of  $\Delta^D_{a}\!\!\restriction_{r\leq \eta}$ can be introduced in the usual way.

The main result of this paper is the following theorem. 
\begin{theorem} \label{main}For the spectral determinant of the Friederichs selfadjoint extension of the Laplace-Beltrami operator on the hyperbolic cone $\left([0,\eta]_r\times[0,2\pi)_\theta, m_a \right)$ with Dirichlet boundary condition on $r=\eta>0$ and the metric $m_a$ given in~\eqref{ma} we have 
\begin{equation}\label{star}
\begin{aligned}
\log\det(\Delta^D_{a}\!\!\restriction_{r\leq \eta})=-\frac 1 {6}\left(a+\frac 1 a \right)\log\tanh\frac \eta 2+\frac {3-8\cosh \eta}{12}a-2\zeta_B'(0;a,1,1)
\\
-\frac 1 6\left(a+3+\frac 1 a \right)\log a -\frac 1 2 \log 2\pi.
\end{aligned}
\end{equation}
Here $\zeta_B'$ stands for the derivative with respect to $s$ of  the Barnes double zeta function 
\begin{equation}\label{BZ}
\zeta_B(s;a,b,x)=\sum_{m,n=0}^\infty (am+bn+x)^{-s},\quad \Re s>2, a>0, b>0, x>0,
\end{equation}
that is first defined by the double series and then extended by analyticity to $s=0$.  
\end{theorem}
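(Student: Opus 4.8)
The plan is to follow the ``ad-hoc'' strategy announced in the introduction: prove the formula first for \emph{spherical} cones (curvature $+1$), where the metric is $dr^2 + a^2\sin^2 r\,d\theta^2$ and the underlying surface is a geodesic disk in a spindle, and then obtain the hyperbolic statement by analytic continuation in the curvature. Concretely, I would first write $\det(\Delta^D_a\!\!\restriction_{r\le\eta})$ for the spherical cone in closed form: cut the spindle (two spherical cones of angle $2\pi a$ glued along their common boundary circle) along the circle $r=\eta$ and apply the BFK gluing formula of \cite{BFK}, in the boundary version of \cite{LeeBFKboundary}. This expresses $\det$ of the Laplacian on the full spindle as the product of the Dirichlet determinants on the two pieces times $\det$ of the Dirichlet-to-Neumann operator on the separating circle (up to an explicit local factor). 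Since $\det$ on the spindle is already known from \cite{Spreafico3}, and the Dirichlet-to-Neumann operator on a circle of geodesic radius $\eta$ on the round sphere can be diagonalized in Fourier modes $e^{in\theta}$ with eigenvalues expressible through associated Legendre functions (essentially $n\coth$-type quantities), one solves for the single remaining unknown, $\det(\Delta^D_a\!\!\restriction_{r\le\eta})$. The Barnes double zeta function enters because the known spindle determinant of \cite{Spreafico3} is stated in terms of $\zeta_B'$, and the integral representation of \cite{Spreafico2} is the tool for manipulating these $\zeta_B'(0;a,1,1)$ terms and extracting the $s$-derivative at $s=0$ cleanly.

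The second ingredient is a \emph{conformal anomaly} computation. The metrics $m_a^\kappa = dr^2 + a^2 (\sin^2(\sqrt\kappa\,r)/\kappa)\,d\theta^2$ for $\kappa>0$, $\kappa=0$ (flat cone $dr^2+a^2r^2d\theta^2$), and $\kappa=-1$ (our $m_a$, after rescaling so $\sinh$ appears) are all conformal to the flat cone metric on a fixed disk, away from the vertex; the conformal factor is smooth up to the boundary and has a mild (integrable, log-type) behavior at the vertex. The Polyakov--Alvarez formula \cite{Alvarez,OPS} then gives the ratio of the two determinants as an explicit integral of the conformal factor against curvature and geodesic-curvature terms, plus a boundary-length term. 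Carrying this out lets me relate $\log\det$ for the spherical cone, the flat cone (recovering \cite{Spreafico}), and the hyperbolic cone within one analytic family parametrized by $\kappa$. The point of passing through $\kappa=0$ is twofold: it is a consistency check against the literature, and it pins down the integration ``constant'' (a function of $a$ only) that analytic continuation alone would leave ambiguous — that constant is exactly the $-2\zeta_B'(0;a,1,1) - \frac16(a+3+1/a)\log a - \frac12\log 2\pi$ piece, whose $\eta$-independence is visible because those terms survive in the shrinking/flat limit.

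After assembling these pieces the hyperbolic formula \eqref{star} should drop out: the term $-\frac16(a+\frac1a)\log\tanh\frac\eta2$ is the $\eta$-dependent contribution coming from the Dirichlet-to-Neumann determinant on the circle of geodesic radius $\eta$ (the hyperbolic analogue of the $\log\tan(\eta/2)$ one gets on the sphere and $\log(\eta/2)$... i.e.\ $\log\eta$ on the flat cone), the term $\frac{3-8\cosh\eta}{12}a$ is the Polyakov--Alvarez bulk integral of the (constant) curvature $-1$ against the conformal factor over the hyperbolic disk of radius $\eta$, and the remaining $a$-dependent-only terms are the conformally invariant ``vertex + normalization'' contribution identified at $\kappa=0$. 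I expect the main obstacle to be \textbf{bookkeeping of the singular contributions at the vertex} in both the BFK gluing and the Polyakov--Alvarez formula: the conical point is not a smooth boundary point, so one must either work on the complement of a small disk $r\in[\epsilon,\eta]$, track the $\epsilon\to0$ asymptotics of every term (Dirichlet determinant on the annulus, DtN determinants on the two circles, Polyakov--Alvarez integrals), and check that the divergences cancel, or invoke a version of BFK/Polyakov--Alvarez valid for conical metrics with the correct defect terms. Getting the Barnes-function $s$-derivative manipulations (via \cite{Spreafico2}) and the various special-function evaluations to combine into precisely the coefficients $\frac16(a+1/a)$, $\frac{3-8\cosh\eta}{12}$, and $\frac16(a+3+1/a)$ is delicate but, once the singular bookkeeping is under control, essentially mechanical.
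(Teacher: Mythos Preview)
Your high-level plan --- spindle determinant from \cite{Spreafico3}, BFK cut, Polyakov--Alvarez, analytic continuation in curvature --- matches the paper's. But several of your execution steps are confused or unnecessarily hard, and the paper's route sidesteps exactly the difficulties you flag.

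\emph{(i) The BFK cut only gives one unknown at the equator.} If you cut the spindle at a general geodesic circle $r=\eta$, the two pieces are cones of radii $\eta$ and $\pi-\eta$ (curvature $+1$), so BFK is one equation in two unknown Dirichlet determinants; you cannot ``solve for the single remaining unknown''. The paper cuts only at the equator (Lemma~\ref{LII}), where the two halves are congruent.

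\emph{(ii) No DtN computation is needed.} Rather than diagonalizing the Neumann jump operator through associated Legendre functions, the paper uses the conformal invariance of $\det N/\ell$ (closed case) and of $\det N$ (Dirichlet-bounded case). Writing the BFK formula once for $m_{a,K}$ and once for a flat reference metric and dividing cancels $\det N$ entirely, yielding~\eqref{14:34}. Your Legendre-function plan is not wrong, but it is a long computation the paper never performs.

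\emph{(iii) Polyakov--Alvarez is applied only on the annulus.} The ``singular bookkeeping at the vertex'' you anticipate is completely avoided: the paper applies Polyakov--Alvarez only on the smooth annulus $1/\sqrt[2a]{K}\le|z|\le 1$ (equation~\eqref{PA}), where the conformal factor has no singularity. The conical contribution is already encoded in the equatorial-hemisphere determinant from step~(i).

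\emph{(iv) The $\eta$-dependence is a rescaling, not a DtN determinant.} In the paper the term $-\tfrac16(a+\tfrac1a)\log\tanh\tfrac\eta2$ does not come from $\det N$. After Lemma~\ref{LIII} gives $\zeta'(0)$ and $\zeta(0)=\tfrac1{12}(a+\tfrac1a)$ for the fixed disk $(|z|\le1,m_{a,K})$ with $K>-1$, one observes that the hyperbolic cone of geodesic radius $\eta$ is isometric to $(|z|\le1,\,|K|\cdot m_{a,K})$ with $K=-\tanh^2\tfrac\eta2$; the $\log\tanh$ term is then just the rescaling contribution $-\zeta(0)\log|K|$, and the term $\tfrac{3-8\cosh\eta}{12}a$ is $-\tfrac{11}{12}a+\tfrac{4a}{3(K+1)}$ rewritten via $K+1=\cosh^{-2}\tfrac\eta2$.

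\emph{(v) Analytic continuation leaves nothing ambiguous.} There is no ``integration constant'' to pin down at $\kappa=0$: once one shows (as the paper does, via Kato-analyticity of the family $K\mapsto\Delta^D_{a,K}$ and Hartogs' theorem) that $\zeta'(0,\Delta^D_{a,K}\!\!\restriction_{|z|\le1})$ is real-analytic in $K>-1$, the formula established for $K>1$ extends uniquely. The flat case $K=0$ is a check, not a normalization.
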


Since the orbifold setting  (where $a$ can only be the reciprocal of a positive integer) is of particular interest~\cite{F-P,TZ,Teo}, we also formulate the following corollary of Theorem~\ref{main}:
\begin{corollary}\label{cor}
Let $a$ in the statement of Theorem~\ref{main} be such that $a=w^{-1}$, where $w$ is a positive integer. Then the equality~\eqref{star} takes the form 
\begin{equation}\label{w}
\begin{aligned}
\log\det(\Delta^D_{w^{-1}}\!\!\restriction_{r\leq \eta})=&-\frac 1 {6}\left(w+\frac 1 w \right)\log\tanh\frac \eta 2 +\frac {3-8\cosh \eta}{12w}-\frac 2 w \zeta'_R(-1)
\\
&+\frac 2 w\sum_{j=1}^{w-1} j \log\Gamma\left(\frac j w\right)-\frac{w}2 \log 2\pi
+\frac 1 6\left(w+3+\frac 2 w \right)\log w,
\end{aligned}
\end{equation}
where $\zeta'_R(s)$ stands for the derivative with respect to $s$ of the Riemann zeta function $\zeta_R(s)$ and $\Gamma$ is the usual gamma function. 
 \end{corollary}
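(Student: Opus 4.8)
The plan is to derive the corollary directly from Theorem~\ref{main} by evaluating the Barnes double zeta derivative $\zeta_B'(0;a,1,1)$ at the arithmetic value $a=w^{-1}$ and then simplifying the resulting logarithmic and polynomial terms. The starting point is to substitute $a=w^{-1}$ into~\eqref{star}: the term $-\tfrac1{6}(a+\tfrac1a)\log\tanh\tfrac\eta2$ becomes $-\tfrac1{6}(w+\tfrac1w)\log\tanh\tfrac\eta2$, the term $\tfrac{3-8\cosh\eta}{12}a$ becomes $\tfrac{3-8\cosh\eta}{12w}$, and $-\tfrac16(a+3+\tfrac1a)\log a = -\tfrac16(w^{-1}+3+w)\log w^{-1} = \tfrac16(w+3+w^{-1})\log w$. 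The only real work is to express $-2\zeta_B'(0;w^{-1},1,1)$ in terms of $\zeta_R'(-1)$, the sum $\sum_{j=1}^{w-1}j\log\Gamma(j/w)$, and elementary terms, and to reconcile the leftover $\log w$ contributions so that the coefficient becomes $\tfrac16(w+3+\tfrac2w)$ as claimed rather than $\tfrac16(w+3+w^{-1})$ — the discrepancy of $\tfrac1{6w}\log w$ must come out of the Barnes term.

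The key step is therefore a scaling/decomposition identity for the Barnes double zeta function. First I would use the homogeneity of the summand: $\zeta_B(s;w^{-1},1,1)=\sum_{m,n\ge0}(w^{-1}m+n+1)^{-s}=w^s\sum_{m,n\ge0}(m+wn+w)^{-s}=w^s\,\zeta_B(s;1,w,w)$. Differentiating in $s$ at $s=0$ gives $\zeta_B'(0;w^{-1},1,1)=\zeta_B(0;1,w,w)\log w+\zeta_B'(0;1,w,w)$, so I need the special value $\zeta_B(0;1,w,w)$ (a polynomial in $w$ obtainable from the standard Bernoulli-polynomial formula for values of Barnes zeta functions at $s=0$) and the derivative $\zeta_B'(0;1,w,w)$. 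For the latter I would split the $w$-fold periodicity in the second variable: writing $n=qw+j$ is not quite it; instead group according to residues to reduce $\zeta_B(s;1,w,w)$ to a combination of shifted Hurwitz-type/Barnes single zeta functions, or more directly use the classical relation between the Barnes $G$-function (equivalently $\zeta_B'(0;1,1,x)$) and $\log\Gamma$, together with the multiplication/distribution formula for $\log\Gamma$. The identity $\sum_{j=0}^{w-1}\log\Gamma\!\left(x+\tfrac jw\right)=\tfrac{w-1}2\log 2\pi+\bigl(\tfrac12-wx\bigr)\log w+\log\Gamma(wx)$ (Gauss multiplication) is the natural tool, and its $j$-weighted analogue produces exactly the $\sum_{j=1}^{w-1}j\log\Gamma(j/w)$ term; the $-2\zeta_R'(-1)$ appears because $\zeta_B'(0;1,1,1)$ is expressible through $\zeta_R'(-1)$ (via $\log G(2)=0$ and $\log A$, the Glaisher--Kinkelin constant, with $\log A=\tfrac1{12}-\zeta_R'(-1)$).

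In practice the cleanest route is: (i) reduce $\zeta_B(s;1,w,w)$ to $\sum_{k=1}^{w} (\text{Barnes single zeta or Hurwitz zeta with shift } k)$ by summing the geometric-like first index or by the residue decomposition in the second index; (ii) differentiate at $s=0$ using the known formulas $\zeta_H'(0,x)=\log\Gamma(x)-\tfrac12\log2\pi$ and the Barnes analogue involving $\log G$; (iii) collect the $\log\Gamma(j/w)$ terms with their integer weights $j$, collect the constant $-\tfrac12\log2\pi$ and $-\tfrac w2\log2\pi$ normalizations, and collect the $\zeta_R'(-1)$ terms; (iv) track every stray $\log w$ carefully, since the $\log w$ bookkeeping is where errors hide and where the stated coefficient $\tfrac16(w+3+\tfrac2w)$ differs superficially from the naive $\tfrac16(w+3+\tfrac1w)$. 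The main obstacle is exactly this last point — assembling the several sources of $\log w$ (from the homogeneity factor $w^s$, from $\zeta_B(0;1,w,w)$, from the Gauss multiplication formula, and from the $-\tfrac12\log2\pi$ versus $-\tfrac w2\log2\pi$ shift) into the compact closed form — rather than any single hard analytic fact; each ingredient is classical, but the simplification is delicate enough that I would verify the final formula numerically for $w=2,3$ against a direct truncated computation of $\det(\Delta^D_{w^{-1}}\!\restriction_{r\le\eta})$.
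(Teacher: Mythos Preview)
Your plan is correct and uses the same two ingredients as the paper: the homogeneity scaling $\zeta_B(s;w^{-1},1,1)=w^{s}\zeta_B(s;1,w,w)=w^{s}\zeta_B(s;w,1,w)$ and a residue-class decomposition into Hurwitz zeta functions, followed by $\zeta_H'(0,x)=\log\Gamma(x)-\tfrac12\log2\pi$. The paper's execution is a bit more economical than what you sketch: rather than separating $\zeta_B(0;1,w,w)\log w$ from $\zeta_B'(0;1,w,w)$ and invoking Barnes $G$, the Glaisher--Kinkelin constant, or a ``$j$-weighted Gauss multiplication'' (which you do not actually state), the paper derives a single identity valid for all $s$,
\[
\zeta_B(s;1/w,1,1)=w^{\,s-1}\zeta_R(s-1)-\frac1w\sum_{j=1}^{w-1}j\,\zeta_H\!\left(s;\tfrac jw\right),
\]
obtained by summing $\zeta_B(s;w,1,k)$ over $k=1,\dots,w$ and telescoping. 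Differentiating once at $s=0$ then yields the weighted sum $\sum_{j=1}^{w-1} j\log\Gamma(j/w)$ directly, and the only $\log w$ contribution comes from the factor $w^{\,s-1}$ multiplied by $\zeta_R(-1)=-\tfrac1{12}$, giving exactly the extra $-\tfrac{1}{12w}\log w$ you anticipated; no separate Bernoulli-polynomial evaluation of $\zeta_B(0;1,w,w)$ is needed. Your route would reach the same endpoint, but with more bookkeeping and more opportunities for the $\log w$ errors you flag.
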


In particular, by letting $\eta$ go to zero in~\eqref{w}    we obtain the asymptotics
\begin{equation}\label{correction}
\begin{aligned}
\log\det(\Delta^D_{w^{-1}}\!\!\restriction_{r\leq \eta})=&-\left(\frac w 6 +\frac 1 {6w} \right)\log{ \eta}- w\left(\frac 1 3\log 2  +\frac 1 2 \log\pi\right)\\&-\frac 1 w \left( 2  \zeta'_R(-1)- 2\sum_{j=1}^{w-1} j \log\Gamma\left(\frac j w\right)+\frac 5 {12}-\frac 1 6\log 2\right)  \\
&
+\frac 1 2\log w + \frac 1 6 w\log w +\frac 1 3 \frac{\log w } w  +o(1),\quad \eta\to0^+,
\end{aligned}
\end{equation}
which is our correction to the asymptotics obtained in~\cite[Theorem 2.5 and its lengthy and highly technical proof in Section 7]{F-P}, where only the first term and the terms $\frac 1 2\log w$ and  $ \frac 1 6 w\log w $ are in agreement with our results; see also Remark~\ref{pol-alv} at the end of this paper. Let us also mention that the  main idea of the paper~\cite{F-P} on replacing the cusps and hyperbolic cones with smooth caps, relying on the BFK decomposition formula and conformal invariance of its certain components, does not appear to be completely new. For instance,  in~\cite{Kokot} the conical points of polyhedral surfaces were smoothed in a similar way, in~\cite{HKK1} a similar idea was used to close the cylindrical ends, and in~\cite{HKK2} --- to close the Euclidean and conical ends.

This paper consists of three sections. Introduction and the statement of the main results (Theorem~\ref{main} and Corollary~\ref{cor}) occupy this Section~\ref{S1}. In Section~\ref{SPH} we deduce a formula for the determinant of Friederichs Dirichlet Laplacian on spherical cones. 
In Section~\ref{HYP} we further develop our results for the determinant on spherical cones by gluing spherical cones to spherical annuli, then extend our results to hyperbolic cones by means of analytic continuation,  and finally prove Theorem~\ref{main} and Corollary~\ref{cor}.  In Remark~\ref{pol-alv} at the end of Section~\ref{HYP} we discuss the simplest special case of hyperbolic cone without conical singularity (cone of angle $2\pi$, i.e. $a=w^{-1}=1$).  

\section{Preliminaries: spherical cones}\label{SPH}
Consider a spindle of curvature $K>0$ with two conical singularities of angle $2\pi a>0$. This is a surface isometric to the Riemann sphere $\Bbb CP^1$ endowed with the  metric 
\begin{equation}\label{MAK}
m_{a,K}=\frac {4a^2|z|^{2a-2}|dz|^2}{(1+K|z|^{2a})^2}
\end{equation}
of constant curvature $K>0$ and  two antipodal conical singularities of angle $2\pi a$ at $z=0$ and $z=\infty$, see~\cite{Troyanov}. 

The metric~\eqref{MAK}
  is invariant under the inversion $z\mapsto  (\sqrt[a]{K}z)^{-1}$ with respect to the equator $| z|= 1/\sqrt[2a]{K}$ of the spindle. 
In other words, the spindle  consists of two congruent spherical (constant curvature $K>0$) cones glued along the  equator. The first cone is isometric to the disk $|z|\leq  1/\sqrt[2a]{K}$ endowed with the metric~\eqref{MAK}, and the second one is isometric to the ``disk'' $\{z: |z|\geq  1/\sqrt[2a]{K} \}\cup\{\infty\}$ endowed with the same metric. The isometry of those two spherical cones is given by the inversion map.  

We first find the spectral zeta function for the Laplacian on a spindle (Lemma~\ref{LI} below) and then cut the spindle into two spherical cones (Lemma~\ref{LII} below) by using the BFK  decomposition formula~\cite[Theorem B$^*$]{BFK}. 
\begin{lemma} \label{LI} Let $\Delta_{a,K}$ stand for the Friederichs extension of the Laplace-Beltrami operator on the spindle $(\Bbb CP^1, m_{a,K})$.  For the modified (i.e. with zero eigenvalue excluded) spectral zeta function of  the Laplacian $\Delta_{a,K}$ we have
 $$
\begin{aligned}
\zeta'(0,\Delta_{a,K})= 4\zeta_B'(0;a,1,1)-\frac a 2 +\frac 1 3 \left(a+\frac 1 a\right)\log \frac a{\sqrt K} +\log K.
\end{aligned}
$$
\end{lemma}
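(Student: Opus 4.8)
The plan is to compute $\zeta'(0,\Delta_{a,K})$ by reducing the spindle to a known computation: the spectral determinant of the Laplacian on the spindle already appears in the literature (Spreafico, \cite{Spreafico3}), typically expressed through Barnes-type zeta functions, but in a normalization (e.g.\ curvature $K=1$, or a different presentation of the metric) that must be reconciled with the metric $m_{a,K}$ in~\eqref{MAK}. So the first step is to separate variables: the metric $m_{a,K}$ is rotationally symmetric, so on each Fourier mode $e^{ik\theta}$ (after passing to a geodesic-polar-type coordinate along the spindle axis) the Laplacian becomes a one-dimensional Sturm--Liouville operator, and the full spectral zeta function decomposes as a sum over $k\in\mathbb Z$ of contour integrals of $\partial_\lambda\log$ of the relevant Jost-type function. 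The vertex angle $2\pi a$ enters the index of the Legendre/hypergeometric functions governing each mode through the combination $|k|/a$, which is exactly what produces the Barnes double zeta function $\zeta_B(s;a,1,1)$ upon resumming; the curvature $K$ enters only through an overall scaling of the metric.

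Next I would exploit the scaling. Writing $m_{a,K}=K^{-1}\widetilde m_{a}$ where $\widetilde m_a$ is the curvature-$1$ metric, the Laplacian scales as $\Delta_{a,K}=K\,\Delta_{a,1}$, hence for the modified zeta function on the $N$-dimensional (here surface, so the anomaly dimension is $2$) one has the standard relation $\zeta(s,\Delta_{a,K})=K^{-s}\zeta(s,\Delta_{a,1})$ on the nonzero spectrum, giving $\zeta'(0,\Delta_{a,K})=\zeta'(0,\Delta_{a,1})-(\log K)\,\zeta(0,\Delta_{a,1})$. The value $\zeta(0,\Delta_{a,1})$ is the heat-trace constant term, computable from the local heat coefficient $a_2$ plus the contributions of the two conical singularities: the smooth part integrates the scalar curvature to give (via Gauss--Bonnet for the orbifold) a multiple of the orbifold Euler characteristic, and each cone point of angle $2\pi a$ contributes the classical Cheeger term $\tfrac1{12}(a^{-1}-a)$ (with a sign/normalization to be pinned down). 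Assembling these should yield $\zeta(0,\Delta_{a,1})=-1-\tfrac13(a+a^{-1})\cdot(\text{const})$ in a form matching the coefficient $-\bigl(-\tfrac13(a+\tfrac1a)\bigr)$ multiplying $\log(a/\sqrt K)$ and the $+\log K$ in the claimed formula; this consistency check is worth doing carefully.

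It then remains to identify $\zeta'(0,\Delta_{a,1})$ with $4\zeta_B'(0;a,1,1)-\tfrac a2+\tfrac13(a+\tfrac1a)\log a$. This I would obtain from \cite{Spreafico3}: the determinant of the Laplacian on the spindle is given there in closed form, and one simply substitutes the orbifold/cone-angle parameters and translates Spreafico's $\Gamma$-modular or Barnes $G$-function expressions into $\zeta_B'(0;a,1,1)$ using the integral representation for the Barnes double zeta function from \cite{Spreafico2}. The factor $4$ arises because the spindle has two identical cone points and the double zeta naturally packages a double sum over a Fourier index and a radial/Sturm--Liouville index; the $-\tfrac a2$ and the $\tfrac13(a+\tfrac1a)\log a$ are the finite remainders of the resummation (the Barnes zeta regularization of $\sum_k |k|/a$-type tails together with the $k=0$ mode).

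The main obstacle I expect is \emph{bookkeeping of normalizations and of the $k=0$ (zero-mode) subtraction}: the spindle Laplacian has a genuine zero eigenvalue (constants), which is why the ``modified'' zeta function is used, and one must be careful that Spreafico's formula, the Barnes double zeta integral representation, and the conical heat-coefficient contributions are all using compatible conventions for (i) the cone angle versus its reciprocal, (ii) the curvature normalization, and (iii) whether the zero mode is excluded before or after analytic continuation. Getting the precise constants $4$, $-\tfrac a2$, $\tfrac13(a+\tfrac1a)$, and $+\log K$ right is entirely a matter of tracking these conventions through the chain of cited results; there is no analytic difficulty beyond the standard contour-integral/heat-kernel machinery.
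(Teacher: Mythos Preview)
Your proposal is correct and follows essentially the same route as the paper: reduce to curvature $K=1$ via the rescaling identity $\zeta'(0,\Delta_{a,K})=\zeta'(0,\Delta_{a,1})-\zeta(0,\Delta_{a,1})\log K$, take $\zeta'(0,\Delta_{a,1})$ from Spreafico--Zerbini~\cite{Spreafico3}, and rewrite it as $4\zeta_B'(0;a,1,1)-\tfrac a2+\tfrac13(a+\tfrac1a)\log a$ using the Barnes double zeta integral representation from~\cite{Spreafico2}. The only minor difference is that the paper also takes $\zeta(0,\Delta_{a,1})=\tfrac16(a+\tfrac1a)-1$ directly from~\cite[Theorem~4.15]{Spreafico3} rather than assembling it from Cheeger's cone heat coefficients as you outline; your separation-of-variables discussion is explanatory background rather than an alternative argument.
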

\begin{proof} This lemma is essentially a reformulation of results in~\cite{Spreafico3}.
By~\cite[Theorem 4.16]{Spreafico3}  for the modified spectral zeta function of $\Delta_{a,1}$ we have
\begin{equation}\label{SPR1}
\begin{aligned}
\zeta'(0, \Delta_{a,1})=-\left(\frac a 3 +\frac 1 {3a}\right)\log a-2\log2\pi+\frac a 3 +1 +\frac 1 {2a} +\log\Gamma\left(1+\frac 1 a \right)
\\
-2a\zeta_R'(-1) -2a\zeta'_H\left(-1,1+\frac 1 a\right)+2i\int_0^\infty\log\frac {\Gamma\bigl(1+i\frac {y}a\bigr)\Gamma\bigl(1+\frac{1+iy} a\bigr)}{\Gamma\bigl(1-i\frac y a\bigr)\Gamma\bigl(1+\frac{1-iy} a\bigr)}\frac {dy}{e^{2\pi y}-1}.
\end{aligned}
\end{equation}
A typo in the  fifth term of the corresponding formula in~\cite[Theorem 4.16]{Spreafico3} was corrected in~\cite{Klevtsov}, where it was also observed that the integral representation for the Barnes double zeta function
$$\begin{aligned}
\zeta'_B(0;a,b,x)&=\left(-\frac 1 2 \zeta_H\left(0,\frac x a\right)+\frac a b \zeta_H\left(-1,\frac x a \right)-\frac 1 {12}\frac b a\right)\log a +\frac 1 2 \log\Gamma\left(\frac x a\right)
\\
&-\frac 1 4\log(2\pi)-\frac a b \zeta_H\left(-1,\frac x a\right)-\frac a b \zeta'_H\left(-1,\frac x a \right)+i\int_0^\infty\log\frac {\Gamma\left(\frac {x+iby}{a}\right)}{\Gamma\left(\frac {x-iby}{a}\right)}\frac {dy}{e^{2\pi y}-1}
\end{aligned}
$$
from~\cite[Proposition 5.1]{Spreafico2} allows to rewrite~\eqref{SPR1}  in the form
\begin{equation}\label{19:48}
\zeta'(0, \Delta_{a,1})=4\zeta_B'(0;a,1,1)-\frac a 2 +\frac 1 3 \left(a+\frac 1 a\right)\log a. 
\end{equation}
By~\cite[Theorem 4.15]{Spreafico3}  we  have
\begin{equation}\label{19:27}
\zeta(0,\Delta_{a,1})=\frac 1 6 \left( a +\frac 1 {a}\right)-1.
\end{equation}
Since the spindles $(\Bbb CP^1, K^{-1}\cdot m_{a,1})$ and $(\Bbb CP^1, m_{a,K})$ are isometric (the isometry is given by the change of variable $z\mapsto \sqrt[2a]{K} z$), 
 the standard rescaling property implies
 $$
\begin{aligned}
\zeta'(0,\Delta_{a,K})=& \zeta'(0, \Delta_{a,1})- \zeta(0,\Delta_{a,1}) \log K.
\end{aligned}
$$
This together with~\eqref{19:48} and~\eqref{19:27} proves the assertion. 
\end{proof}

\begin{lemma} \label{LII}Consider the spherical cone  $( |z|\leq  1/\sqrt[2a]{K},m_{a,K})$ with conical singularity of angle $2\pi a>0$ at $z=0$ and of constant Gaussian curvature  $K>0$; here the metric  $m_{a,K}$ is the same as in~\eqref{MAK}.  
For the spectral zeta function of the corresponding Friederichs Laplacian $\Delta^D_{a,K}\!\!\restriction_{|z|\leq \frac 1 {\sqrt[2a] K}}$ with Dirichlet boundary condition on $| z|=  1/\sqrt[2a]{K}$ we have:
$$
\begin{aligned}
\zeta'(0,\Delta^D_{a,K}\!\!\restriction_{|z|\leq \frac 1 {\sqrt[2a] K}})=& 2\zeta_B'(0;a,1,1)-\frac a 4  
\\
& +\frac 1 6 \left(a+3+\frac 1 a\right)\log a - \frac 1 {12}\left(a+\frac 1 a \right)\log K  +\frac 1 2  \log {2\pi }.
\end{aligned}
$$
\end{lemma}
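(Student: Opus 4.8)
The plan is to derive the assertion from Lemma~\ref{LI} by cutting the spindle $(\Bbb CP^1,m_{a,K})$ along its equator $\Gamma=\{|z|=K^{-1/2a}\}$ into the two congruent spherical cones of the statement and applying the BFK decomposition formula~\cite[Theorem B$^*$]{BFK}. Since the Friederichs Laplacian $\Delta_{a,K}$ has a one-dimensional kernel (the constants) while the two Dirichlet Laplacians are invertible, the formula takes the form
$$
\det{}'\Delta_{a,K}=\frac{\mathrm{Area}(\Bbb CP^1,m_{a,K})}{\ell(\Gamma)}\,\Bigl(\det\Delta^D_{a,K}\!\!\restriction_{|z|\leq \frac1{\sqrt[2a] K}}\Bigr)^{2}\det{}^{*}\!\mathcal R ,
$$
where $\mathcal R=\mathrm{DN}_1+\mathrm{DN}_2$ is the Neumann jump operator on $\Gamma$ and $\det{}^{*}$ excludes its one-dimensional kernel. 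First I would record, from Gauss--Bonnet applied to the spindle with its two cone points of angle $2\pi a$, the constants $\mathrm{Area}=4\pi a/K$ and $\ell(\Gamma)=2\pi a/\sqrt K$; the conical point sits in the interior of one of the two pieces, away from $\Gamma$, and the Friederichs extension on the spindle restricts to the finite-energy Dirichlet problem on each cone, so the singularity does not interfere with the localization behind the BFK formula.

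Next I would evaluate $\det{}^{*}\!\mathcal R$. Because $m_{a,K}$ depends only on $|z|$, the operator $\mathrm{DN}_1$ is rotation- and reflection-invariant; the inversion identifying the two cones acts on $\Gamma$ as a reflection, hence $\mathrm{DN}_2=\mathrm{DN}_1$ and $\mathcal R=2\,\mathrm{DN}_1$. Writing $m_{a,K}=e^{2\phi}g_0$ with $g_0=|z|^{2a-2}|dz|^2$ the flat cone metric of angle $2\pi a$ and $e^{2\phi}=4a^2(1+K|z|^{2a})^{-2}$ smooth and positive near $z=0$, one has $\phi\!\restriction_\Gamma=\log a$ (since $K|z|^{2a}=1$ on $\Gamma$), so by the conformal covariance of the Dirichlet-to-Neumann operator in two dimensions $\mathrm{DN}_1=a^{-1}\mathrm{DN}_0$, where $\mathrm{DN}_0$ is the Dirichlet-to-Neumann operator of the flat cone $(\{|z|\leq K^{-1/2a}\},g_0)$, isometric to $\{t\leq(a\sqrt K)^{-1}\}$ with metric $dt^2+a^2t^2\,d\theta^2$. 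On the latter the finite-energy harmonic extension of $e^{in\theta}$ is $(t/\rho)^{|n|/a}e^{in\theta}$ with $\rho=(a\sqrt K)^{-1}$, so $\mathrm{DN}_0$ is diagonal with eigenvalues $|n|/(a\rho)$ and $\zeta_{\mathrm{DN}_0}(s)=2(a\rho)^{s}\zeta_R(s)$; from $\zeta_R(0)=-\tfrac12$, $\zeta_R'(0)=-\tfrac12\log2\pi$ this gives $\zeta^{*}_{\mathrm{DN}_0}(0)=-1$ and $\det{}^{*}\mathrm{DN}_0=2\pi a\rho=2\pi/\sqrt K$. Using $\det{}^{*}(cA)=c^{\zeta^{*}_A(0)}\det{}^{*}A$ for positive $c$, I obtain $\det{}^{*}\!\mathcal R=2^{\,\zeta^{*}_{\mathrm{DN}_1}(0)}\,a^{-\zeta^{*}_{\mathrm{DN}_0}(0)}\det{}^{*}\mathrm{DN}_0=2^{-1}\,a\cdot(2\pi/\sqrt K)=\pi a/\sqrt K$.

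To conclude I would take logarithms in the BFK identity, insert $\log\det{}'\Delta_{a,K}=-\zeta'(0,\Delta_{a,K})$ from Lemma~\ref{LI} together with the constants above; the $\mathrm{Area}/\ell(\Gamma)$ and $\det{}^{*}\!\mathcal R$ factors then collapse to $\tfrac12\log\frac{2\pi a}{K}$, leaving $\zeta'(0,\Delta^D_{a,K}\!\!\restriction)=\tfrac12\zeta'(0,\Delta_{a,K})+\tfrac12\log\frac{2\pi a}{K}$, which reproduces the stated formula after elementary algebra (for instance $\tfrac16(a+\tfrac1a)\log a+\tfrac12\log a=\tfrac16(a+3+\tfrac1a)\log a$, and the $\log K$ contributions combine into $-\tfrac1{12}(a+\tfrac1a)\log K$).

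The step I expect to be the main obstacle is pinning down the exact form of the BFK decomposition in the presence of both the zero mode of $\Delta_{a,K}$ and the conical point: namely, verifying that the only corrections to naive multiplicativity of $\zeta$-determinants are the $\mathrm{Area}/\ell(\Gamma)$ factors (no further multiplicative anomaly survives because the cutting hypersurface is one-dimensional) and that the cone tip, lying in the interior of one piece, is genuinely harmless, together with the careful zero-mode bookkeeping for $\mathcal R$; the conformal-covariance computation of $\det{}^{*}\!\mathcal R$ and the final algebra are routine. As a consistency check, the same value of $\det\Delta^D_{a,K}\!\!\restriction$ can also be reached from the known flat-cone determinant~\cite{Spreafico} via the Polyakov--Alvarez formula~\cite{Alvarez,OPS} applied to the tip-smooth conformal factor $e^{2\phi}$.
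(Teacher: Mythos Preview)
Your argument is correct and follows the same architecture as the paper's proof: apply the BFK decomposition~\cite[Theorem B$^*$]{BFK} to the spindle cut along its equator, use the symmetry to get the square of the Dirichlet determinant, and then feed in Lemma~\ref{LI}. The only substantive difference is in how you evaluate the Neumann jump determinant. The paper invokes the conformal invariance of the ratio $\det N/\ell$ on closed surfaces (citing~\cite{Wentworth}) to reduce to the round sphere, and then reads off $\det N_{1,1}/\ell_{1,1}=\tfrac12$ from the known values of $\det\Delta_{1,1}$ and $\det\Delta^D_{1,1}\!\!\restriction_{|z|\leq 1}$ in~\cite{Weisberger}. You instead compute the Dirichlet-to-Neumann eigenvalues directly, using the pointwise conformal covariance $\mathrm{DN}_{m_{a,K}}=e^{-\phi}|_\Gamma\,\mathrm{DN}_{g_0}$ to pass to the flat cone $g_0=|z|^{2a-2}|dz|^2$, where the finite-energy harmonic extensions $t^{|n|/a}e^{in\theta}$ give the eigenvalues explicitly and $\det{}^*\mathcal R=\pi a/\sqrt K$ (equivalently $\det N_{a,K}/\ell_{a,K}=\tfrac12$, in agreement with the paper). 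Your route is more self-contained---it avoids importing both the Wentworth invariance statement and the Weisberger constants---while the paper's route is shorter once those facts are granted.

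One minor slip: the spindle carries \emph{two} conical points, one at $z=0$ and one at $z=\infty$, so each of the two cones has a conical tip in its interior, not just ``one of the two pieces''. This does not affect anything, since by symmetry both Dirichlet problems are identical and the conical points are away from $\Gamma$ in both pieces.
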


\begin{proof} The assertion is an immediate consequence of Lemma~\ref{LI} and BFK decomposition formula~\cite[Theorem B$^*$]{BFK}. (Let us mention that for the constant curvature metrics with conical singularities the proof of BFK formula requires only  minor modifications,  details can be found in~\cite[Section 2.3]{Kalvin Pol-Alv}).

Indeed, for the decomposition of  the spindle along its equator  the BFK formula gives:
\begin{equation}\label{BFKspindle}
\det\Delta_{a,K}= \frac {4\pi a}{K} \cdot\left(\det (\Delta^D_{a,K}\!\!\restriction_{|z|\leq \frac 1 {\sqrt[2a] K}})\right)^2\cdot\frac {\det ( N_{a,K}\!\!\restriction_{|z|=\frac 1 {\sqrt[2a] K}})
}{\ell_{a,K}},
\end{equation}
where $\frac {4\pi a}{K}$ is the total area of  the spindle $(\Bbb CP^1, m_{a,K})$, $\ell_{a,K}$ is the length of the equator  in the metric $m_{a,K}$, and $N_{a,K}\!\!\restriction_{|z|=\frac 1 {\sqrt[2a] K}}$ is the Neumann jump operator on the equator (a first order classical pseudodifferential operator). On the surfaces without boundary the quotient $\det ( N_{a,K}\!\!\restriction_{|z|=\frac 1 {\sqrt[2a] K}})
/{\ell_{a,K}}$ is conformally invariant, see e.g.~\cite{Wentworth}. Hence 
$$
\frac {\det ( N_{a,K}\!\!\restriction_{|z|=\frac 1 {\sqrt[2a] K}})}{\ell_{a,K}}=\frac {\det( N_{1,1}\!\!\restriction_{|z|=1})}{\ell_{1,1}}=\frac 1 2,
$$ 
where the last equality is a consequence of~\eqref{BFKspindle} with $a=K=1$ and the explicit formulas~\cite{Weisberger} for the  determinant $\det\Delta_{1,1}$ of the Laplacian on a unit sphere and the determinant $\det (\Delta^D_{1,1}\!\!\restriction_{|z|\leq 1})$ of the Dirichlet Laplacian on a unit hemisphere. 

Being rewritten in terms of the spectral zeta functions the formula~\eqref{BFKspindle} gives
$$
-\zeta'(0,\Delta_{a,K})= \log \frac {4\pi a}{K}- 2\zeta'(0, \Delta^D_{a,K}\!\!\restriction_{|z|\leq \frac 1 {\sqrt[2a] K}}) -\log 2.
$$
This together with Lemma~\ref{LI} implies the assertion.
\end{proof}

\section{From spherical to hyperbolic cones}\label{HYP}
 In this section we prove Theorem~\ref{main} and Corollary~\ref{cor}. The proofs are preceded by Lemma~\ref{LIII} below, where we first further develop our results for spherical cones  and then 
 extend the results to hyperbolic cones by means of analytic continuation with respect to the curvature.  
 \begin{lemma} \label{LIII}
 Consider the   cone  $( |z|\leq 1, m_{a,K})$ of constant curvature $K>-1$ with conical singularity of angle $2\pi a$ at the vertex $z=0$, where $a>0$ and the metric  $ m_{a,K}$ is given in~\eqref{MAK}.  
For the spectral zeta function of the corresponding Friederichs Laplacian $\Delta^D_{a,K}\!\!\restriction_{|z|\leq 1}$ with Dirichlet boundary condition on $| z|= 1$ we have:
\begin{equation}\label{14:57}
\begin{aligned}
\zeta'(0,\Delta^D_{a,K}\!\!\restriction_{|z|\leq 1})=& 2\zeta_B'(0;a,1,1) -\frac {11}{12} a
\\
&+\frac 1 6 \left(a+3+\frac 1 a\right)\log a+ \frac {4}{3} \frac{a}{K+1}+\frac 1 2  \log {2\pi },
\\
\zeta(0,\Delta^D_{a,K}\!\!\restriction_{|z|\leq 1})= &\frac 1 {12}\left(a+\frac 1 a \right).
\end{aligned}
\end{equation}
 \end{lemma}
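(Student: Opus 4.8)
The plan is to reduce the statement about the cone $(|z|\le 1, m_{a,K})$ of curvature $K>-1$ to the already-established Lemma~\ref{LII} for the spherical cone $(|z|\le 1/\sqrt[2a]{K}, m_{a,K})$ of curvature $K>0$, by inserting a spherical annulus between the two boundary circles and then analytically continuing in $K$. First I would fix $K>0$ and observe that the cone $(|z|\le 1/\sqrt[2a]{K}, m_{a,K})$ decomposes along the circle $|z|=1$ (valid as long as $1<1/\sqrt[2a]{K}$, i.e. $K<1$, which suffices since we will continue analytically afterwards) into the smaller cone $(|z|\le 1, m_{a,K})$ and the spherical annulus $A_K=(1\le |z|\le 1/\sqrt[2a]{K}, m_{a,K})$. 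Applying the BFK decomposition formula~\cite[Theorem~B$^*$]{BFK} (which holds for constant-curvature conical metrics after the minor modifications of~\cite[Section~2.3]{Kalvin Pol-Alv}) to this decomposition expresses $\det(\Delta^D_{a,K}\!\!\restriction_{|z|\le 1/\sqrt[2a]{K}})$ as a product of $\det(\Delta^D_{a,K}\!\!\restriction_{|z|\le 1})$, $\det(\Delta^D_{a,K}\!\!\restriction_{A_K})$ with Dirichlet conditions on both components of $\partial A_K$, and the Neumann-jump-operator determinant on the cutting circle $|z|=1$; in zeta-regularized form this is an additive identity among the various $\zeta'(0,\cdot)$ together with the $\log(\text{area})$ correction term.

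The annulus $A_K$ is smooth (no conical point), so its pieces are amenable to the classical Polyakov–Alvarez conformal-variation formula~\cite{Alvarez,OPS}: I would compute $\zeta'(0,\Delta^D_{a,K}\!\!\restriction_{A_K})$ and the Neumann-jump contribution by conformally relating $A_K$ to a flat cylinder (or a flat round annulus), whose determinants are classical, picking up explicit boundary integrals of the geodesic curvature and bulk integrals of the Gaussian curvature $K$ against the conformal factor. Here one also needs $\zeta(0,\Delta^D_{a,K}\!\!\restriction_{|z|\le 1})$, which comes from the standard heat-coefficient formula: the bulk term contributes $\frac{1}{4\pi}\int K\,\dvol$, the smooth boundary contributes the usual $\frac{1}{12\pi}\int_{|z|=1}\kappa_g\,ds$, and the conical point of angle $2\pi a$ contributes the well-known $\frac{1}{12}(a^{-1}-a)$ (or $\frac{1}{12}(a+a^{-1})-\frac16$ once combined), and all the curvature/boundary integrals can be done in closed form for the explicit metric~\eqref{MAK}. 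Collecting everything and using Lemma~\ref{LII} to eliminate $\zeta'(0,\Delta^D_{a,K}\!\!\restriction_{|z|\le 1/\sqrt[2a]{K}})$ yields the claimed formula for $\zeta'(0,\Delta^D_{a,K}\!\!\restriction_{|z|\le 1})$ with the $\frac{4}{3}\,a/(K+1)$ term, valid for $0<K<1$; since both sides are real-analytic in $K$ on $K>-1$ (the left side because the eigenvalues and hence $\zeta(s,\Delta^D_{a,K}\!\!\restriction_{|z|\le 1})$ depend analytically on $K>-1$, the metric being analytic in $K$ and the domain fixed), the identity extends to all $K>-1$, in particular to the hyperbolic range $-1<K<0$.

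The main obstacle will be the bookkeeping in the Polyakov–Alvarez step for the annulus and the Neumann-jump factor: one must be careful that the conformal factor relating~\eqref{MAK} on $A_K$ to a flat model is singular at the conical point only \emph{outside} $A_K$, so $A_K$ itself is genuinely smooth and the classical formula applies without extra anomaly terms, but the boundary terms on the two circles $|z|=1$ and $|z|=1/\sqrt[2a]{K}$ must be evaluated consistently with the orientations dictated by the BFK formula, and the conformally-invariant Neumann-jump/length quotient must be correctly normalized (as was done for the spindle in the proof of Lemma~\ref{LII}). A secondary point requiring care is the justification of analytic continuation in $K$ through $K=0$: one should note that, with the domain $|z|\le1$ held fixed, the operator $\Delta^D_{a,K}\!\!\restriction_{|z|\le1}$ forms an analytic family of self-adjoint operators with compact resolvent for $K>-1$ (the lower bound $K>-1$ being exactly what keeps the total area $4\pi a/(K+1)$ finite and the Friederichs extension well-behaved), so $\zeta'(0,\cdot)$ and $\zeta(0,\cdot)$ are real-analytic in $K$ there and the identity, once proved on an open subinterval, holds throughout. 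Finally, the second line $\zeta(0,\Delta^D_{a,K}\!\!\restriction_{|z|\le1})=\frac{1}{12}(a+a^{-1})$ is independent of $K$; I would verify it directly from the heat-kernel coefficient $a_1$ for the explicit metric and check consistency with Lemma~\ref{LII} via the same BFK decomposition at the level of $\zeta(0,\cdot)$.
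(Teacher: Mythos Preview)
Your strategy is essentially the one the paper uses: relate the unit-radius cone to the cone of Lemma~\ref{LII} via BFK along the intermediate circle, handle the smooth annulus by Polyakov--Alvarez, and then analytically continue in $K$. Two points of comparison are worth noting. First, the paper works in the range $K>1$, so the cutting circle $|z|=1/\sqrt[2a]{K}$ sits \emph{inside} the unit disk and the annulus is $1/\sqrt[2a]{K}\le |z|\le 1$; your choice $0<K<1$ merely reverses which cone is the big one and is equally valid. Second, and more substantively, the paper does not compute the Neumann-jump determinant at all: it writes the BFK identity twice, once for $m_{a,K}$ and once for the flat reference metric $|dz|^2$, and divides; since on a manifold with Dirichlet boundary the determinant $\det N$ (not $\det N/\ell$) is conformally invariant, the jump factors cancel and one is left with \emph{ratios} of Dirichlet determinants, which are exactly what Polyakov--Alvarez and the explicit flat-disk formula of Weisberger supply. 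This bypasses the direct annulus/Neumann-jump computation you sketch. Also, there is no $\log(\text{area})$ correction here: the Dirichlet Laplacian on the big cone has trivial kernel, so the BFK formula from~\cite{LeeBFKboundary} is a bare product. Finally, for $\zeta(0)$ the paper uses the rescaling trick (redo the computation for $C\cdot m_{a,K}$ and read off the coefficient of $\log C$) rather than heat coefficients; your heat-coefficient route works too.
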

 
\begin{proof} Let us first assume that $K>1$ (we will then extend the results by analyticity with respect to $K$ to all $K>-1$ as claimed).   We intend to glue the spherical annulus 
 $
 \left( \frac 1{\sqrt[2a]{K}}\leq|z|\leq 1,  m_{a,K}\right)
 $
to the spherical  cone $\left(|z|\leq \frac 1{\sqrt[2a]{K}} , m_{a,K}\right)$ relying on the BFK decomposition formula for the determinants on surfaces with boundary~\cite[Corollary 1.3]{LeeBFKboundary},   conformal invariance of the Neumann jump operator, and Lemma~\ref{LII}. (Let us mention again that for the constant curvature metrics with conical singularities the proof of BFK formula requires only minor modifications~\cite[Section 2.3]{Kalvin Pol-Alv}).

The BFK formula for the decomposition of the determinant $\det (\Delta^D_{a,K}\!\!\restriction_{|z|\leq 1})$ along the circle $|z|= 1/{\sqrt[2a]{K}}$  reads
$$
\det (\Delta^D_{a,K}\!\!\restriction_{|z|\leq 1})=\det (\Delta^D_{a,K}\!\!\restriction_{|z|\leq \frac 1 {\sqrt[2a] K}})\cdot \det (\Delta^D_{a,K}\!\!\restriction_{\frac 1  {\sqrt[2a] K}\leq |z|\leq 1})\cdot \det ( N_{a,K}\!\!\restriction_{|z|=\frac 1 {\sqrt[2a] K}}),
$$
where $\Delta^D_{a,K}\!\!\restriction_{\frac 1  {\sqrt[2a] K}\leq |z|\leq 1}$ is the selfadjoint Dirichlet Laplacian on the annulus and $N_{a,K}\!\!\restriction_{|z|=\frac 1 {\sqrt[2a] K}}$ is the Neumann jump operator on the circle.

Let us also consider  $m_\flat=|dz|^2$ as a  reference metric. All  objects defined in terms of the flat metric $m_\flat$ will have the subscript $\flat$.  The BFK formula for the decomposition of the determinant of  Dirichlet Laplacian $\det (\Delta^D_{\flat}\!\!\restriction_{|z|\leq 1})$  along the circle $|z|= 1/{\sqrt[2a]{K}}$  reads
$$
\det (\Delta^D_{\flat}\!\!\restriction_{|z|\leq 1})=\det (\Delta^D_{\flat}\!\!\restriction_{|z|\leq \frac 1 {\sqrt[2a] K}})\cdot \det (\Delta^D_{\flat}\!\!\restriction_{ \frac 1 {\sqrt[2a] K}\leq |z|\leq 1})\cdot \det ( N_{\flat}\!\!\restriction_{|z|=\frac 1 {\sqrt[2a] K}}),
$$
where $\det ( N_{a,K}\!\!\restriction_{|z|=\frac 1 {\sqrt[2a] K}})=\det ( N_{\flat}\!\!\restriction_{|z|=\frac 1 {\sqrt[2a] K}})$ due to conformal invariance of the determinant $\det ( N_{a,K}\!\!\restriction_{|z|=\frac 1 {\sqrt[2a] K}})$ on surfaces with boundary (as noticed in~\cite{Wentworth},  the conformal invariance can be most easily seen from the BFK formula together with  Polyakov-Alvarez formula for the Dirichlet Laplacians). 
As a result, we have
\begin{equation}\label{14:34}
{\det (\Delta^D_{a,K}\!\!\restriction_{|z|\leq 1})}={\det (\Delta^D_{\flat}\!\!\restriction_{|z|\leq 1})}\cdot\frac {\det (\Delta^D_{a,K}\!\!\restriction_{|z|\leq\frac 1 { \sqrt[2a] K}})}{\det (\Delta^D_{\flat}\!\!\restriction_{|z|\leq \frac 1 {\sqrt[2a] K}})}\cdot\frac{\det (\Delta^D_{a,K}\!\!\restriction_{ \frac 1 {\sqrt[2a] K}\leq |z|\leq 1})}{ \det (\Delta^D_{\flat}\!\!\restriction_{ \frac 1 {\sqrt[2a] K}\leq |z|\leq 1})}.
\end{equation}

For the determinant of Dirichlet Laplacian on the flat  disk $(|z|\leq r, |dz|^2)$ we use the formula
\begin{equation*}
\log \det (\Delta^D_{\flat}\!\!\restriction_{|z|\leq r})=-\frac 1 3 \log r +\frac 1 3 \log 2 -2\zeta_R'(-1) -\frac 5  {12} -\frac 1 2 \log 2\pi
\end{equation*}
 found in~\cite[f-la (28)]{Weisberger}. This together with  Lemma~\ref{LII} gives
\begin{equation}\label{14:35}
\begin{aligned}
&\log\left({\det (\Delta^D_{\flat}\!\!\restriction_{|z|\leq 1})}\cdot\frac {\det (\Delta^D_{a,K}\!\!\restriction_{|z|\leq \frac 1 {\sqrt[2a] K}})}{\det (\Delta^D_{\flat}\!\!\restriction_{|z|\leq \frac 1 {\sqrt[2a] K}})}\right)=-\frac 1 {6a} \log K-2\zeta_B'(0;a,1,1)
\\
&\qquad+\frac a 4 -\frac 1 6 \left(a+3+\frac 1 a\right)\log a   + \frac 1 {12}\left(a+\frac 1 a \right)\log K  -\frac 1 2  \log {2\pi }.
\end{aligned}
\end{equation}

For the annulus $ \frac 1 {\sqrt[2a] K}\leq |z|\leq 1$ the well-known Polyakov-Alvarez formula~\cite{Alvarez,OPS} gives
\begin{equation}\label{PA}
\begin{aligned}
& \log \frac{\det (\Delta^D_{a,K}\!\!\restriction_{\frac 1  {\sqrt[2a] K}\leq |z|\leq 1})}{ \det (\Delta^D_{\flat}\!\!\restriction_{ \frac 1 {\sqrt[2a] K}\leq |z|\leq 1})}
\\
& = -\frac{1}{6\pi}\left (\frac1 2 \int_{   \frac 1 {\sqrt[2a] K}\leq |z|\leq 1  } |\nabla_\flat\psi |^2\,\frac{dz\wedge d \bar z}{-2i}+\oint_{\left\{|z|= \frac 1 {\sqrt[2a] K}\right\}\cup\{|z|=1\}}k_\flat\psi \,|d z|  \,\right )
\\
&\qquad-\frac 1 {4\pi}\oint_{\left\{|z|= \frac 1 {\sqrt[2a] K}\right\}\cup\{|z|=1\}}\partial_{n_\flat}\psi \,|d z|,
\end{aligned}
\end{equation}
where 
$$
\psi(z)=\psi(|z|)=\log (2a) +(a-1)\log |z|-\log(1+K|z|^{2a})
$$ 
is the (smooth in the annulus) potential of the metric $ m_{a,K}=e^{2\psi}|dz|^2$. As before, the symbol $\flat$ refers to the flat metric $|dz|^2$; i.e. $\nabla_\flat$ is the gradient, $k_\flat$ is the geodesic curvature
 ($k_\flat=1$ on the circle $|z|=1$ and $k_\flat= - \sqrt[2a]{K} $ on the circle $|z|= \frac 1 {\sqrt[2a]{K}}<1 $),  $\partial_{n_\flat}$ is the outer normal derivative.

For the gradient of $\psi$ with $r=|z|$ we have
 $$
 \begin{aligned}
  |\nabla_\flat\psi (z)|^2 & =|\partial_r\psi(r)|^2=\left| \frac {a-1} r -\frac {2aK r^{2a-1}}{1+Kr^{2a}}   \right|^2
  \\
 & =\frac {(a-1)^2}{r^2}-\frac {4aK(a-1)r^{2a-2}}{1+Kr^{2a}}+Kr^{2a-2}  (r\partial_r)^2\log(1+Kr^{2a}),
 \end{aligned}
 $$
 where on the last step we used the Liouville equation  for $\psi$:
  $$
\frac {4a^2 r^{2a-2}}{(1+Kr^{2a})^2}K=r^{-2}(r\partial_r)^2\log(1+Kr^{2a}).
 $$

   Evaluating the integrals in the right hand side of~\eqref{PA} we obtain
$$
\begin{aligned}
&-\frac 1 {12\pi}\int_{ \frac 1   {\sqrt[2a] K}\leq |z|\leq 1  } |\nabla_\flat\psi |^2\,\frac{dz\wedge d \bar z}{-2i}=-\frac 1 6 \int_{\frac 1 {\sqrt[2a]{K}}}^1|\partial_r\psi(r)|^2r\,dr
\\
&\ \ =-\frac 1 6 \left(\frac {(a-1)^2}{2a}\log K     -\int_{K^{-1}}^1\frac {2K(a-1)dt}{1+Kt}+ \frac {2aK^2 t^2}{1+Kt}\Bigr|_{t=K^{-1}}^{t=1}-2aK\int_{K^{-1}}^1 \frac {K t\,dt}{1+Kt}  \right)
\\
&\ \ = - \frac {(a-1)^2}{12a}\log K -\frac 1 3  \log( {1+K})  -  \frac 1 3  \frac {a}{K+1} +\frac a 6  +\frac 1 3 \log 2  ,
\end{aligned}
$$

$$
-\frac 1 {6\pi}\oint_{|z|=1}k_\flat\psi \,|d z| =-\frac 1 3 \bigl(\log( 2a)-\log(1+K)\bigr),
$$
$$
-\frac 1 {6\pi}\oint_{|z|= \frac 1 {\sqrt[2a] K}}k_\flat\psi \,|d z| =\frac 1 3 \left(\log (2a) -\frac {a-1}{2a}\log K-\log 2\right),
$$
$$
-\frac 1 {4\pi}\oint_{|z|= \frac 1 {\sqrt[2a] K}}\partial_{n_\flat}\psi \,|d z|=-\frac 1 2,\quad
-\frac 1 {4\pi}\oint_{|z|=1}\partial_{n_\flat}\psi \,|d z|=\frac{1} 2 +\frac a 2- \frac{a}{K+1}.
$$
As a result the equality~\eqref{PA} takes the form 
 $$
 \log \frac{\det (\Delta^D_{a,K}\!\!\restriction_{ \sqrt[2a] K\leq |z|\leq 1})}{ \det (\Delta^D_{\flat}\!\!\restriction_{ \sqrt[2a] K\leq |z|\leq 1})}=\frac2 3  a -\frac 4 3 \frac  {a}{K+1}-\frac 1 {12}\left(a-\frac 1 a \right)\log K. 
 $$ 
This together with~\eqref{14:34} and~\eqref{14:35} completes the proof of the first formula in~\eqref{14:57} for $K>1$. Now the equality~\eqref{14:57} extends by analyticity with respect to $K$ to all $K>-1$. 

Indeed, we only need to verify that  the  left hand side of~\eqref{14:57} is an  analytic function of $K>-1$.   In the  local polar geodesic coordinates $(r,\theta)$ centered at $z=0$ the metric~\eqref{MAK} takes the form
$
m_{a,K}=dr^2+a^2h^2(r,K)\,d\theta^2$, 
where  $\theta\in \Bbb R/2\pi \Bbb Z$ and  
\begin{equation}\label{h(r)}
h(r,K)=\left\{
\begin{array}{cc}
 K^{-1/2}\sin(K^{1/2}r), &  K>0 \ \text{(spherical cone)},\\
 r, & K=0\ \text{(flat cone)},    \\
  |K|^{-1/2}\sinh (|K|^{1/2}r), & K<0\ \text{(hyperbolic cone)}.    
\end{array}
\right.
\end{equation}
Clearly, $\lim_{r\to 0} h(r)/r=1$, $h'(0)=1$, and $h''(0)=0$. As a consequence, 
the well-known results~\cite{BS2} imply  in a standard way that  for each $K>-1$ the coefficient before $\log t$ in the  asymptotics of the heat trace $\Tr e^{-t \Delta^D_{a,K}\!\!\restriction_{|z|\leq 1}}$ as $t\to 0+$ is zero and the spectral zeta function $\zeta(s,\Delta^D_{a,K}\!\!\restriction_{|z|\leq 1})$ continues by analyticity with respect to $s$ from $\Re s>2$ to a neighborhood of $s=0$;  details can be found e.g. in~\cite[Sec. 5]{KalvinJGA}. Since $-1<K\mapsto \Delta^D_{a,K}\!\!\restriction_{|z|\leq 1}$ is an analytic family in the sense of Kato~\cite{Kato} and $(\Delta^D_{a,K}\!\!\restriction_{|z|\leq 1})^{-2}$ is in the trace class, the representation
$$
\zeta(s,\Delta^D_{a,K}\!\!\restriction_{|z|\leq 1})=\frac 1 {2\pi i (s-1)}\int_{\mathcal C} \lambda^{1-s}\Tr(  \Delta^D_{a,K}\!\!\restriction_{|z|\leq 1}-\lambda )^{-2}\,d\lambda
$$
(here $\mathcal C$ is a contour running clockwise at a sufficiently close distance around the cut $(-\infty,0]$ and  $\lambda^z=|\lambda|^z e^{iz\arg z}$ with $|\arg \lambda|\leq \pi$~\cite{Shubin}) implies that $\zeta(s,\Delta^D_{a,K}\!\!\restriction_{|z|\leq 1})$ is an analytic function of the variables $s$ and $K$ for $\Re s>2$ and $K>-1$. Therefore the function $(s,K)\mapsto \zeta(s, \Delta^D_{a,K}\!\!\restriction_{|z|\leq 1})$ continues by analyticity from $\Re s>2$, $K>-1$ to a neighborhood of $s=0$ by a classical result due to Friedrich Hartogs. In particular,  $\zeta'(0, \Delta^D_{a,K}\!\!\restriction_{|z|\leq 1})$ is the real analytic function of $K>-1$ found in~\eqref{14:57}.

In order to verify the second equality in~\eqref{14:57} one can, for instance, repeat all calculations  for the scaled metric $C\cdot  m_{a,K}$,  find $\log\det(C^{-1} \Delta^D_{a,K}\!\!\restriction_{|z|\leq 1})$, and then extract the expression for $\zeta(0, \Delta^D_{a,K}\!\!\restriction_{|z|\leq 1})$ from the standard rescaling property
$$
\log\det(C^{-1} \Delta^D_{a,K}\!\!\restriction_{|z|\leq 1})=\log\det( \Delta^D_{a,K}\!\!\restriction_{|z|\leq 1}) -\zeta(0, \Delta^D_{a,K}\!\!\restriction_{|z|\leq 1})\log C.
$$
This is straightforward and we omit the details.

Let us note that in the particular case of $K=0$ the formulas~\eqref{14:57} are in agreement with the results of~\cite{Spreafico}, they are also in agreement with the results obtained in~\cite[Sec. 3.3]{Kalvin Pol-Alv} by a completely different method. 
\end{proof}

Now we are in position to prove the main result of this paper. 

\begin{proof}[Proof of Theorem~\ref{main}] The change of variable $z\mapsto|K|^{-\frac 1 {2a}} z$ with $-1<K<0$ shows that the hyperbolic cones $(|z|\leq 1,|K|\cdot m_{a,K})$ and $(|z|\leq |K|^{\frac 1 {2a}}, m_{a,-1})$
are isometric. By the usual rescaling property we thus obtain
\begin{equation}\label{rescl}
\log\det \Delta^D_{a,-1}\!\!\restriction_{|z|\leq |K|^{\frac 1 {2a}}}=\log\det \Delta^D_{a,K}\!\!\restriction_{|z|\leq 1}-\zeta(0,\Delta^D_{a,K}\!\!\restriction_{|z|\leq 1})\log|K|,
\end{equation}
where $ \Delta^D_{a,-1}\!\!\restriction_{|z|\leq |K|^{\frac 1 {2a}}}$ stands for the Friederichs selfadjoint extension of the Dirichlet Laplacian on the hyperbolic (curvature $-1$) cone $(|z|\leq |K|^{\frac 1 {2a}}, m_{a,-1})$. 

In the geodesic polar coordinates $(r,\theta)$ centered at $z=0$ both hyperbolic cones mentioned above take the form 
$\left( [0,\eta]_r\times [0,2\pi)_\theta, m_a\right)$ with the metric $m_a$ from~\eqref{ma} and $\eta>0$ satisfying 
\begin{equation}\label{Keta}
K=
-\tanh^2\frac \eta 2.
\end{equation}  
For instance,  the geodesic polar coordinates $(r,\theta)$ are related to the coordinate $z$  of  the hyperbolic cone $(|z|\leq |K|^{\frac 1 {2a}}, m_{a,-1})$  by the equalities
$$
r=2\tanh^{-1}|z|^a,\quad \theta=\arg z;
$$
in particular, $\eta=2\tanh^{-1}\sqrt{|K|}$, cf.~\eqref{Keta}. (Note that in the case $a=1$ there is no conical singularity,  see Remark~\ref{pol-alv} below.)

Since for $K$ and $\eta$ related by~\eqref{Keta} the hyperbolic cones $\left( [0,\eta]_r\times [0,2\pi)_\theta, m_a\right)$ and $(|z|\leq |K|^{\frac 1 {2a}}, m_{a,-1})$ are isometric, the equality~\eqref{rescl} gives
$$
\log\det\Delta^D_{a}\!\!\restriction_{r\leq \eta}=\log\det \Delta^D_{a,-1}\!\!\restriction_{|z|\leq |K|^{\frac 1 {2a}}}=\log\det \Delta^D_{a,K}\!\!\restriction_{|z|\leq 1}-\zeta(0,\Delta^D_{a,K}\!\!\restriction_{|z|\leq 1})\log|K|.
$$
This together with Lemma~\ref{LIII} and  relation~\eqref{Keta} completes the proof of Theorem~\ref{main}.
\end{proof}

Next we prove Corollary~\ref{cor} that deals with the orbifold setting: a hyperbolic cone of angle $2\pi/w$ with a positive integer $w$. 
\begin{proof}[Proof of Corollary~\ref{cor}] Let $w$ be a positive integer. We only need to show that 
\begin{equation}\label{zeta}
\zeta_B'\left(0; 1 /w,1,1\right)=\frac 1 w \zeta'_R(-1)-\frac 1 {12w}\log w-\frac 1 w \sum_{j=1}^{w-1} j \log\Gamma\left(\frac j w\right)+\frac{w-1}4 \log 2\pi,
\end{equation}
where $\zeta'_R$ stands for the derivative of the Riemann zeta function. Then~\eqref{zeta} together with~\eqref{star} implies~\eqref{w} and thus completes thee proof of Corollary~\ref{cor}. 

The identity~\eqref{zeta} is a particular case of a more general result proved in~\cite[Lemma A.1]{Kalvin Pol-Alv}.  For the reader's convenience and to make this paper self-contained we give a proof of~\eqref{zeta} below.

We rely on the definition~\eqref{BZ} of the Barnes double zeta function and notice that
$$\sum_{k=1}^w\zeta_B(s;w,1,k)=\zeta_B(s;1,1,1)=\sum_{\ell=0}^\infty\sum_{n=0}^\ell(\ell+1)^{-s}=\zeta_R(s-1).$$
 For each term of the sum on the left we have
\begin{equation}\label{auxil1}
\begin{aligned}
&\zeta_B(s;w,1,k)
=\sum_{m=0}^\infty\sum_{n=k-1}^\infty(wm+n+1)^{-s}
\\
&=\sum_{m=0}^\infty\left(\sum_{n=0}^\infty(wm+n+1)^{-s}-\sum_{n=0}^{k-2}(wm+n+1)^{-s}\right)
\\
&=\zeta_B(s;w,1,1)-w^{-s}\sum_{j=1}^{k-1}\zeta_H(s;j/w),
\end{aligned}
\end{equation}
where $\zeta_H(s; x )=\sum_{m=0}^\infty(m+x)^{-s}$ is the Hurwitz zeta function. Thus we obtain
\begin{equation}\label{auxil2}
\begin{aligned}
\zeta_B(s;w,1,1)&=\frac 1 w \zeta_R(s-1)+w^{-s-1}\sum_{k=1}^w\sum_{j=1}^{k-1}\zeta_H(s;j/w)
\\&=\frac 1 w \zeta_R(s-1)+w^{-s-1}\sum_{j=1}^{w-1}(w-j)\zeta_H(s;j/w).
\end{aligned}
\end{equation}
Since 
$\zeta_B(s;1/w,1,1)=w^{s}\zeta_B(s;w,1,w)$, thanks to~\eqref{auxil1} with $k=w$ we have
$$
\zeta_B(s;1/w,1,1)=w^s\left(\zeta_B(s;w,1,1)-w^{-s}\sum_{j=1}^{w-1}\zeta_H(s;j/w)\right).
$$
This together with~\eqref{auxil2} gives
$$
\zeta_B(s;1/w,1,1)=w^{s-1}\zeta_R(s-1)-\frac 1 w\sum_{j=1}^{w-1}j \zeta_H\left(s; j/ w\right). 
$$

Now we differentiate with respect to $s$. Taking into account that 
$$
\zeta_R(-1)=-\frac 1{12},\quad \zeta_H'(0;j/w)=\log\Gamma\left (\frac j w\right)-\frac 1 2 \log 2\pi,
$$
we arrive at~\eqref{zeta}. 
\end{proof}

\begin{remark}\label{pol-alv} In the particular case of $a=1$ the hyperbolic cone $\left( [0,\eta]_r\times [0,2\pi)_\theta, m_a\right)$, where $m_a$ is the metric~\eqref{ma},   has no conical singularity (the cone angle is $2\pi$). It is isometric to the disk 
$$\left(|z|\leq {\tanh\frac\eta 2 }<1,e^{2\psi(z)}|dz|^2\right),\quad \psi(z)=\log 2-\log(1-|z|^2),$$
 cut out of the usual Poincar\'e disk $(|z|\leq 1, e^{2\psi(z)}|dz|^2)$. The isometry is given by 
 $$
r=2\tanh^{-1}|z|,\quad \theta=\arg z.
 $$
Therefore the determinant $\det\Delta^D_1\!\!\restriction_{r\leq \eta}$ of the selfadjoint Dirichlet Laplacian  on the hyperbolic cone $\left( [0,\eta]_r\times [0,2\pi)_\theta, m_1\right)$ can be easily found by using the usual Polyakov-Alvarez formula~\cite{Alvarez,OPS} together with explicit formula for the determinant of  Dirichlet Laplacian for a reference metric. 
 
 Indeed, let us  take the flat metric $m_\flat=|dz|^2$ as the reference metric on the disk $|z|\leq {\tanh\frac\eta 2 }$. Then  the Polyakov-Alvarez formula reads
 \begin{equation}\label{PAL}
 \begin{aligned}
 \log \frac {\det(\Delta^D_1\!\!\restriction_{r\leq \eta})} {\det (\Delta^D_\flat\!\!\restriction_{|z|\leq {\tanh\frac\eta 2 }})}=& -\frac{1}{6\pi}\left (\frac1 2 \int_{|z|\leq {\tanh\frac\eta 2 }} |\nabla_\flat\psi|^2\frac{dz\wedge d\bar z}{-2i}\right.
 \\
&\left. +\oint_{|z|={\tanh\frac\eta 2 }}k_\flat\psi \,|d z|  \,\right )
 -\frac 1 {4\pi}\oint_{|z|={\tanh\frac\eta 2 }}\partial_{n_\flat}\psi\,|dz|,
 \end{aligned}
\end{equation}
where $\nabla_\flat$ is the gradient, $k_\flat=1/\tanh\frac\eta 2 $ is the geodesic curvature,  and $\partial_{n_\flat}=\partial_{|z|}$ is the outer normal derivative, all with respect to the  metric $m_\flat$. By~\cite[f-la (28)]{Weisberger} we have
$$
\log \det (\Delta^D_{\flat}\!\!\restriction_{|z|\leq  {\tanh\frac\eta 2 }})=-\frac 1 3 \log  {\tanh\frac\eta 2 }  +\frac 1 3 \log 2 -2\zeta_R'(-1) -\frac 5  {12} -\frac 1 2 \log 2\pi.
$$
Evaluation of the integrals in~\eqref{PAL} gives
$$
\begin{aligned}
 \int_{|z|\leq {\tanh\frac\eta 2 }} |\nabla_\flat\psi|^2\frac{dz\wedge d\bar z}{-2i}&=2\pi\int_0^{\tanh \frac \eta 2}\frac {4|z|^3\,d|z|}{(1-|z|^2)^2}
 \\
 &=4\pi\left( \log\left(1-\tanh^2\frac\eta 2 \right)+\left(1-\tanh^2\frac \eta 2\right)^{-1}-1\right),
\end{aligned}
$$
$$
\oint_{|z|={\tanh\frac\eta 2 }}k_\flat\psi \,|d z|=2\pi\left(\log 2-\log\left(1-\tanh^2\frac\eta 2 \right)\right),
$$
$$
\oint_{|z|={\tanh\frac\eta 2 }}\partial_{n_\flat}\psi\,|dz|=\int_{|z|={\tanh\frac\eta 2 }}\frac{2|z|}{1-|z|^2}\,|dz|=4\pi\left(\left(1-\tanh^2\frac \eta 2\right)^{-1}-1\right).
$$
As a result, from~\eqref{PAL} we obtain
$$
\log {\det(\Delta^D_1\!\!\restriction_{r\leq \eta})} =-\frac 1 3 \log  {\tanh\frac\eta 2 }  -2\zeta_R'(-1) +\frac {11}  {12} -\frac 4 3  \left(1-\tanh^2\frac \eta 2\right)^{-1} -\frac 1 2 \log 2\pi,
$$
which is in agreement with Theorem~\ref{main}, Corollary~\ref{cor}, and the asymptotics~\eqref{correction}, but not with the asymptotics
$$
\begin{aligned}
&\log  {\det(\Delta^D_{w^{-1}}\!\!\restriction_{r\leq \eta})}=-\left(\frac w 6 +\frac 1 {6w}\right)\log(\eta)-w\left(-2\zeta_R'(-1)+\frac 1 6 -\frac 1 6 \log 2 \right)
\\
&-\frac 1 w \left(\frac 5 {12} -\frac 1 6\log 2 +\frac \gamma 6\right)+\frac 1 2\log w+\frac 1 6 w\log w+\frac 1 6 \frac {\log w}{w}+\frac 1 4+o(1),\quad \eta\to0^+,
\end{aligned}
 $$
from~\cite[Theorem~2.5]{F-P}, cf.~\eqref{correction}. This  is probably the most simple and convincing way of showing that the results in~\cite[Theorem~2.5 and Section 7]{F-P} are incorrect, which also affects the main results of that paper. 
\end{remark}

\end{document}